\documentclass[12pt]{article}
\usepackage{amsmath,amssymb,amsthm,amsfonts}
\usepackage{mathrsfs}
\usepackage{geometry}
\usepackage{booktabs}
\usepackage{array}
\usepackage{longtable}
\usepackage{hyperref}
\usepackage{tikz}
\usepackage{xcolor}
\usepackage[T1]{fontenc}
\usepackage{lmodern}

\newtheorem{theorem}{Theorem}[section]
\newtheorem{lemma}[theorem]{Lemma}
\newtheorem{definition}[theorem]{Definition}
\newtheorem{remark}[theorem]{Remark}

\renewcommand{\Re}{\operatorname{Re}}

\title{
\textbf{
Radii of Concavity for Subclasses of Univalent Functions Associated with the Exponential Mapping}
}

\author{
\textbf{
Pradip Das\textsuperscript{1}
and
Nabadwip Sarkar\textsuperscript{2}
}
}

\date{}

\begin{document}

\maketitle

\makeatletter
\def\blfootnote{
\gdef\@thefnmark{}
\@footnotetext
}
\makeatother

\blfootnote{
2020 \emph{Mathematics Subject Classification}:
30C45, 30C80.
}

\blfootnote{
\emph{Key words and phrases}:
Univalent functions,
differential subordination,
exponential mapping,
radius of concavity,
concave functions.
}

\begin{center}
\small

\textsuperscript{1}
Department of Mathematics,
Raiganj University,
Raiganj,
West Bengal-733134,
India.

\texttt{pradipsmath@gmail.com}

\vspace{0.3cm}

\textsuperscript{2}
Amity School of Applied Sciences,
Amity University Mumbai,
Panvel,
Navi Mumbai,
Maharashtra-410206,
India.

\texttt{nsarkar@mum.amity.edu}

\end{center}

\vspace{0.6cm}

\begin{abstract}

We determine the radii of concavity for the classes $\mathcal{S}_e^*$ and $\mathcal{C}_e$ of univalent functions associated with the exponential mapping $e^z$. Under the geometric framework of Avkhadiev--Wirths for conformal mappings with unbounded convex complements of opening angle $\pi A$ ($A \in (1,2]$), the radii are characterized by explicit transcendental equations. Sharpness is established globally by constructing explicit univalent extremal functions related to the disk automorphism $\omega_0(z) = -z$, and the strict monotonicity of these radii with respect to the parameter $A$ is verified numerically.

\end{abstract}

\section{Introduction and Geometric Foundations}

Let
\[
\mathbb{D} := \{z \in \mathbb{C} : |z| < 1\}
\]
denote the open unit disk, and let $\mathcal{H}$ be the class of analytic functions in $\mathbb{D}$. We denote by $\mathcal{A}$ the standard subclass of $\mathcal{H}$ consisting of normalized functions satisfying
\[
f(0)=0,
\qquad
f'(0)=1.
\]
Consequently, every function $f\in\mathcal{A}$ admits the expansion
\[
f(z)=z+\sum_{n=2}^{\infty}a_n z^n,
\qquad z\in\mathbb{D}.
\]
We further denote by $\mathcal{S}$ the class of functions in $\mathcal{A}$ that are univalent in $\mathbb{D}$ \cite{pm8}.

A central technique in geometric function theory is the method of differential subordination. Let $f$ and $g$ be analytic in $\mathbb{D}$. The function $f$ is subordinate to $g$, written
\[
f\prec g,
\]
if there exists a Schwarz function $\omega$ analytic in $\mathbb{D}$ satisfying
\[
\omega(0)=0,
\qquad
|\omega(z)|<1,
\qquad z\in\mathbb{D},
\]
such that
\[
f(z)=g(\omega(z)).
\]
If $g$ is univalent in $\mathbb{D}$, then
\[
f\prec g
\quad\Longleftrightarrow\quad
f(0)=g(0)
\ \text{and}\
f(\mathbb{D})\subset g(\mathbb{D})
\]
\cite{pm8}.

Using this framework, Ma and Minda \cite{pm10} introduced a unified approach for studying subclasses of starlike and convex functions through subordinations involving suitable analytic functions $\psi$ satisfying
\[
\Re\{\psi(z)\}>0.
\]
Different choices of $\psi$ generate important geometric subclasses associated with cardioid domains, conic regions, and exponential mappings \cite{pm2, pm10}.

Recently, considerable attention has been devoted to subclasses associated with the exponential mapping $e^z$. The corresponding Ma--Minda starlike class, introduced by Mendiratta, Nagpal, and Ravichandran \cite{Mendiratta2015}, is defined by
\begin{equation}\label{def1}
\mathcal{S}_e^*
:=
\left\{
f\in\mathcal{S} :
\frac{zf'(z)}{f(z)}
\prec e^z
\right\}.
\end{equation}
Geometrically, a function belongs to $\mathcal{S}_e^*$ if and only if the logarithmic derivative satisfies
\[
\frac{zf'(z)}{f(z)}
\in
\Omega_e:=e^{\mathbb{D}}
=
\{w\in\mathbb{C}:|\log w|<1\},
\]
where $\Omega_e$ is a simply connected domain symmetric about the real axis.

Similarly, the associated convex class is defined by
\begin{equation}\label{def2}
\mathcal{C}_e
:=
\left\{
f\in\mathcal{S} :
1+\frac{zf''(z)}{f'(z)}
\prec e^z
\right\}.
\end{equation}
Functions in $\mathcal{C}_e$ therefore satisfy the differential subordination condition given in \eqref{def2}.

\begin{figure}[h!]
\centering
\begin{tikzpicture}[scale=2.3]
    \draw[->, gray!40, line width=0.4pt] (-0.5,0) -- (3.2,0) node[right] {\footnotesize $\operatorname{Re}(w)$};
    \draw[->, gray!40, line width=0.4pt] (0,-1.4) -- (0,1.4) node[above] {\footnotesize $\operatorname{Im}(w)$};

    \draw[
        fill=yellow!8,
        draw=yellow!60!black,
        thick,
        domain=-1:1,
        samples=200
    ] 
    plot ({exp(cos(\x r))*cos(sin(\x r) r)}, {exp(cos(\x r))*sin(sin(\x r) r)}) -- cycle;

    \node[yellow!50!black, font=\small] at (1.4, 0.45) {$\Omega_e = e^{\mathbb{D}}$};

    \filldraw[black] (1,0) circle (0.8pt);
    \node[below left, font=\tiny, yshift=-1pt] at (1,0) {$1$};

    \filldraw[black] ({exp(1)},0) circle (0.8pt);
    \node[below, font=\tiny, yshift=-2pt] at ({exp(1)},0) {$e$};

    \filldraw[black] ({exp(-1)},0) circle (0.8pt);
    \node[below, font=\tiny, yshift=-2pt] at ({exp(-1)},0) {$e^{-1}$};
\end{tikzpicture}
\caption{The exponential image domain $\Omega_e = e^{\mathbb{D}}$ associated with the classes $\mathcal{S}_e^*$ and $\mathcal{C}_e$.}
\label{fig:exponential_domain}
\end{figure}

\subsection{The Concave Univalent Class $\mathcal{C}_c(A)$}

Concave univalent functions constitute an important geometric subclass of $\mathcal{S}$ associated with mappings onto domains whose complements are convex and unbounded \cite{pm3, pm4, pm6}. Let $A\in(1,2]$. A function $f\in\mathcal{A}$ belongs to the class $\mathcal{C}_c(A)$ if
\[
f(1)=\infty
\]
and $f$ maps $\mathbb{D}$ conformally onto an unbounded domain whose complement
\[
\mathbb{C}\setminus f(\mathbb{D})
\]
is convex with opening angle at infinity at most $\pi A$.

The analytic characterization of this class was established by Avkhadiev and Wirths \cite{pm6}. They proved that a normalized function $f\in\mathcal{A}$ belongs to $\mathcal{C}_c(A)$ if and only if
\[
\Re\{T_f(z)\}>0,
\qquad z\in\mathbb{D},
\]
where the Avkhadiev--Wirths operator is defined by
\begin{equation}\label{pm1}
T_f(z)
=
\frac{2}{A-1}
\left[
\frac{A+1}{2}
\left(
\frac{1+z}{1-z}
\right)
-
1
-
\frac{zf''(z)}{f'(z)}
\right].
\end{equation}
The M\"obius factor
\[
\frac{1+z}{1-z}
\]
reflects the pole behavior at the boundary point $z=1$ and plays a fundamental role in the geometry of concave mappings.

The investigation of geometric radii constitutes a classical theme in geometric function theory \cite{pm8, pm9, pm12}. In particular, radius problems for subclasses associated with Ma--Minda differential subordinations have attracted considerable attention in recent years. However, to the best of our knowledge, radii of concavity associated with exponential Ma--Minda subclasses have not previously been studied within the Avkhadiev--Wirths framework.

Our primary objective is to determine the sharp radii of concavity for the classes $\mathcal{S}_e^*$ and $\mathcal{C}_e$ with respect to $\mathcal{C}_c(A)$. The resulting radii are characterized by explicit transcendental equations, and sharpness is established globally through extremal functions generated by the disk automorphism $\omega_0(z)=-z$.
\subsection{Radius of Concavity}

A distinctive feature of the class $\mathcal{C}_c(A)$ is that the defining geometric property is generally non-hereditary under dilations \cite{pm7}. Specifically, if
\[
f\in\mathcal{C}_c(A),
\]
the scaled function
\[
g(z)=r^{-1}f(rz)
\]
need not remain in $\mathcal{C}_c(A)$ for arbitrary $r\in(0,1]$. This naturally leads to the notion of the radius of concavity.

Following Bhowmik and Biswas \cite{pm7}, we adopt the following definition.

\begin{definition}
Let $\mathcal{F}\subset\mathcal{A}$. The radius of concavity of $\mathcal{F}$ with respect to the class $\mathcal{C}_c(A)$ is the largest number
\[
R_{\mathcal{F},\mathcal{C}_c(A)}\in(0,1]
\]
such that
\begin{equation}\label{pm2}
\Re\{T_f(z)\}>0
\qquad
\text{for all }
|z|<R_{\mathcal{F},\mathcal{C}_c(A)}
\end{equation}
and every function $f\in\mathcal{F}$.
\end{definition}

\begin{figure}[h!]
\centering
\begin{tikzpicture}[scale=2.2]

    \draw[
        fill=blue!10,
        draw=blue!40,
        dashed,
        line width=1pt
    ]
    (-1.5,1.8)
    -- (0,-0.4)
    -- (1.5,1.8)
    .. controls (0.8,0.7) and (-0.8,0.7) ..
    (-1.5,1.8);

    \draw[thick]
        (0,-0.4) -- (-1.5,1.6)
        node[above left] {$\partial f(\mathbb{D})$};

    \draw[thick]
        (0,-0.4) -- (1.5,1.6);

    \draw[->, thick, domain=-53:233, samples=100]
        plot ({0.25*cos(\x)}, {-0.4 + 0.25*sin(\x)});

    \node at (0.45,-0.55)
        {\small $\pi A$};

    \draw[<->, thin, domain=53:127]
        plot ({0.4*cos(\x)}, {-0.4 + 0.4*sin(\x)});

    \node[above,font=\footnotesize]
        at (0,-0.05)
        {$(2-A)\pi$};

    \node[font=\small,text=blue!60!black]
        at (0,1.1)
        {$\mathbb{C}\setminus f(\mathbb{D})$};

    \node[font=\footnotesize,text=blue!60!black]
        at (0,0.9)
        {(Convex and unbounded set)};

    \node[font=\small]
        at (0,-0.9)
        {Image domain $\Omega=f(\mathbb{D})$};

    \filldraw[black]
        (0,-0.4) circle (1pt);

    \node[below left,font=\tiny]
        at (0,-0.4)
        {$f(1)=\infty$};

\end{tikzpicture}

\caption{
Boundary geometry of a concave domain associated with
$f\in\mathcal{C}_c(A)$.
}

\label{fig:conformal_concave_wedge}
\end{figure}

Geometrically, the condition
\[
\Re\{T_f(z)\}>0
\]
ensures that the image of the subdisk
\[
\mathbb{D}_r=\{z\in\mathbb{C}:|z|<r\}
\]
under $f$ satisfies the concavity constraint determined by the opening angle parameter $\pi A$ \cite{pm6}. The present paper establishes the exact concavity radii for the exponential subclasses $\mathcal{S}_e^*$ and $\mathcal{C}_e$ within this framework.
\section{Auxiliary Lemmas}

\begin{lemma}[\cite{Mendiratta2015}]
\label{L2}
Let $\omega \in \Omega$. For $|z| = r < 1$, 
\[
e^{-r} \le \left| e^{\omega(z)} \right| \le e^r,
\]
and
\[
\Re\left(e^{\omega(z)}\right) \ge e^{-r}\cos(r).
\]
\end{lemma}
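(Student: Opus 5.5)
Here $\Omega$ denotes the class of Schwarz functions, so $\omega$ is analytic in $\mathbb{D}$ with $\omega(0)=0$ and $|\omega(z)|<1$. The plan is to reduce everything to the Schwarz lemma and then to elementary estimates for the exponential on a closed disk.

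\emph{Step 1.} By the Schwarz lemma, $|\omega(z)|\le |z| = r$. So for each fixed $z$ with $|z|=r$, the point $w=\omega(z)$ lies in the closed disk $\overline{\mathbb{D}}_r=\{w:|w|\le r\}$. It therefore suffices to bound $|e^{w}|$ and $\Re(e^{w})$ over $w\in\overline{\mathbb{D}}_r$.

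\emph{Step 2: modulus bounds.} Write $w=u+iv$, so that $|e^{w}|=e^{u}$. Since $-r\le u\le r$ and $t\mapsto e^{t}$ is increasing, we get $e^{-r}\le |e^{\omega(z)}|\le e^{r}$ at once.

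\emph{Step 3: real part.} Here $\Re(e^{w})=e^{u}\cos v$ with $u^2+v^2\le r^2<1$. Because $|v|\le r<1<\pi/2$, we have $\cos v>0$. Then:
\begin{itemize}
\item $e^{u}\ge e^{-r}$, since $u\ge -r$;
\item $\cos v\ge \cos r$, since $|v|\le r$ and cosine is even and decreasing on $[0,\pi/2]$.
\end{itemize}
Multiplying these two inequalities between positive quantities gives $\Re(e^{w})\ge e^{-r}\cos r$.

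\emph{Main obstacle.} The main subtlety is the product step in Step 3. It needs both factors to be positive, and this is exactly where $r<1<\pi/2$ is used. The bound is generally not attained for $r>0$, because $u=-r$ forces $v=0$. It is nonetheless a valid, if non-sharp, lower bound, and that is all the statement asserts. If a sharp value were wanted, one would instead minimize $e^{r\cos\theta}\cos(r\sin\theta)$ over $\theta$. This is not needed here.
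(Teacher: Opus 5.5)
Your proof is correct and complete. The paper gives no argument of its own for this lemma; it is quoted from Mendiratta--Nagpal--Ravichandran. Your route is the natural self-contained one:
\begin{itemize}
\item the Schwarz lemma places $\omega(z)$ in $|w|\le r$;
\item then $|e^{w}|=e^{u}$ with $|u|\le r$ gives the modulus bounds;
\item and $e^{u}\cos v\ge e^{-r}\cos r$ holds because $|v|\le r<\pi/2$ keeps both factors positive.
\end{itemize}

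The minimization you mention in your closing remark is easy to finish. You have
\[
\frac{d}{d\theta}\Bigl(e^{r\cos\theta}\cos(r\sin\theta)\Bigr)=-r\,e^{r\cos\theta}\sin(\theta+r\sin\theta).
\]
Since $\theta\mapsto\theta+r\sin\theta$ increases from $0$ to $\pi$ on $[0,\pi]$ when $r<1$, this derivative is negative on $(0,\pi)$. The function is also even in $\theta$, so its minimum on $|w|=r$ is $e^{-r}$, attained at $w=-r$. Because $\Re e^{w}$ is harmonic, the minimum principle extends this to the whole disk $|w|\le r$. Hence $\Re e^{\omega(z)}\ge e^{-r}$, with equality at $\omega(z)=-r$. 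The factor $\cos r$ in the stated bound is therefore a loss of sharpness, not an error, and your proof of the statement as written stands.
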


To analyze the combined variation of a Schwarz function and its derivative, we rely on the region of variability established by Dieudonn\'{e} \cite{Dieudonn\'e1931}.

\begin{lemma}[Dieudonn\'{e}'s Lemma \cite{Dieudonn\'e1931, pm8}]
\label{L3}
Let $\omega \in \Omega$. For a fixed point $z \in \mathbb{D}$ with $|z|=r$, the joint region of variability for the pair $(\omega(z), \omega'(z))$ is characterized by the inequality
\begin{equation}\label{eq:Dieudonn\'e1}
\left| z\omega'(z) - \omega(z) \right| \le \frac{r^2 - |\omega(z)|^2}{1 - r^2}.
\end{equation}
Equivalently, for a fixed value $\omega(z) = u+iv$, the value $z\omega'(z)$ lies in the closed disk
\begin{equation}\label{eq:Dieudonn\'e2}
\left| z\omega'(z) - (u+iv) \right| \le \frac{r^2 - (u^2+v^2)}{1 - r^2}.
\end{equation}
\end{lemma}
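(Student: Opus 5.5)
This is the classical Dieudonné lemma: for a Schwarz function $\omega$ and a fixed $z$ with $|z|=r<1$, the admissible values of $z\omega'(z)$ form the closed disk described in \eqref{eq:Dieudonn\'e1}. The plan is to reduce it to the Schwarz--Pick inequality applied to a quotient function.

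Since $\omega(0)=0$ and $|\omega|<1$, we first define
\[
\phi(z)=\frac{\omega(z)}{z}\ (z\neq0),\qquad \phi(0)=\omega'(0).
\]
Then $\phi$ is analytic in $\mathbb{D}$. The maximum principle applied on circles $|z|=\rho\to1$ gives $|\phi|\le1$. If $|\phi|$ attains the value $1$ somewhere, then $\omega(z)=\lambda z$ with $|\lambda|=1$. In that case $z\omega'-\omega=0$ and $|\omega(z)|=r$, so both sides of \eqref{eq:Dieudonn\'e1} vanish and the inequality holds with equality. Otherwise $\phi$ maps $\mathbb{D}$ into $\mathbb{D}$, and the Schwarz--Pick lemma gives
\[
|\phi'(z)|\le\frac{1-|\phi(z)|^2}{1-|z|^2}.
\]

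Next we translate this back to $\omega$. From $\omega=z\phi$ we get $z\omega'(z)-\omega(z)=z^2\phi'(z)$ and $|\phi(z)|=|\omega(z)|/r$. Substituting,
\[
|z\omega'(z)-\omega(z)|=r^2|\phi'(z)|\le r^2\,\frac{1-|\omega(z)|^2/r^2}{1-r^2}=\frac{r^2-|\omega(z)|^2}{1-r^2}.
\]
This is exactly \eqref{eq:Dieudonn\'e1}. Writing $\omega(z)=u+iv$ turns it into \eqref{eq:Dieudonn\'e2}, which states that $z\omega'(z)$ lies in a closed disk centred at $u+iv$.

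The only step requiring real care is the claim that this disk is the full region of variability, not merely a region containing it. Given $w_0$ with $|w_0|\le r$ and any $w_1$ in the disk, we must produce a Schwarz function realizing the pair $(w_0,w_1)$. The plan is to set $a=w_0/z$, so $|a|\le1$, and to choose $\phi=$ a disk automorphism or a Blaschke product of degree at most two taking the value $a$ at $z$ with a prescribed derivative. By the sharpness case of Schwarz--Pick, every derivative value satisfying $|\phi'(z)|\le(1-|a|^2)/(1-r^2)$ is attained by a suitable Möbius composition. Setting $\omega=z\phi$ then realizes the pair. This is a standard interpolation argument, and we would cite \cite{pm8} for the details. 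Only the inequality direction is used in the later radius estimates.
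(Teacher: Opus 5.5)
The paper gives no proof of this lemma; it only quotes the classical result from Dieudonn\'e and Duren. Your argument is the standard proof found there, and it is correct: you apply Schwarz--Pick to $\phi(z)=\omega(z)/z$, use the identity $z\omega'(z)-\omega(z)=z^2\phi'(z)$, and handle the converse by building $\phi$ from a M\"obius composition so that $\omega=z\phi$ realizes any admissible pair. The only case left implicit is $|w_0|=r$ in the converse: there the disk collapses to $w_1=w_0$, no automorphism takes the unimodular value $a$ at an interior point, and you should take $\phi\equiv a$, i.e.\ $\omega(\zeta)=a\zeta$.
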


\section{Main Results}

\begin{theorem}[Radius of Concavity for the Class $\mathcal{S}_e^*$]
\label{T1}
Let $A\in(1,2]$. If a function $f\in\mathcal{S}_e^*$, then
\begin{equation}\label{operator_inequality}
\Re\{T_f(z)\}>0 \qquad \text{for } |z|<R_{\mathcal{S}_e^*,\mathcal{C}_c(A)},
\end{equation}
where $R_{\mathcal{S}_e^*,\mathcal{C}_c(A)}$ is the unique root in $(0,1)$ of the transcendental equation
\begin{equation}\label{pm3}
\Phi_1(r) := \frac{A+1}{2} \left( \frac{1-r}{1+r} \right) - e^r - r = 0.
\end{equation}
This radius threshold is sharp.
\end{theorem}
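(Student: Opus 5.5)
The plan is to express $T_f$ through a Schwarz function and bound its real part from below on $|z|=r$ using Dieudonn\'e's Lemma~\ref{L3}. Let $f\in\mathcal{S}_e^*$. Then $zf'(z)/f(z)=e^{\omega(z)}$ for some $\omega\in\Omega$. Logarithmic differentiation gives
\[
1+\frac{zf''(z)}{f'(z)}=e^{\omega(z)}+z\omega'(z).
\]
Since $A>1$, the factor $2/(A-1)$ is positive, so \eqref{operator_inequality} is equivalent to
\[
\frac{A+1}{2}\,\Re\frac{1+z}{1-z}-\Re\bigl(e^{\omega(z)}+z\omega'(z)\bigr)>0 .
\]
The first term satisfies $\Re\frac{1+z}{1-z}\ge \frac{1-r}{1+r}$ on $|z|=r$, with equality at $z=-r$. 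It therefore suffices to show
\[
\Re\bigl(e^{\omega(z)}+z\omega'(z)\bigr)\le e^r+r \qquad (|z|=r\le R),
\]
after which the inequality reduces to $\Phi_1(r)>0$.

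To prove this upper bound, fix $\omega(z)=u+iv$ with $u^2+v^2\le r^2$. By \eqref{eq:Dieudonn\'e2},
\[
\Re\bigl(e^{\omega}+z\omega'\bigr)\le e^u\cos v+u+\frac{r^2-u^2-v^2}{1-r^2}.
\]
Using $\cos v\le 1$ and dropping $-v^2$, this is at most
\[
g(u):=e^u+u+\frac{r^2-u^2}{1-r^2},\qquad u\in[-r,r].
\]
We have $g''(u)=e^u-2/(1-r^2)<0$, so $g$ is concave. Also $g'(r)=e^r+1-2r/(1-r^2)>0$ for all $r$ in the relevant range (say $r\le 1/2$). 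Hence $g$ is increasing on $[-r,r]$ and $g(u)\le g(r)=e^r+r$. This also shows that the crude Schwarz--Pick bound $|z\omega'|\le r/(1-r^2)$ is not needed: the joint region of Lemma~\ref{L3} yields exactly the term $r$.

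Two checks make this rigorous.

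\begin{itemize}
\item \emph{Uniqueness of the root.} $\Phi_1$ is strictly decreasing on $(0,1)$, with $\Phi_1(0)=(A-1)/2>0$ and $\Phi_1(1)=-e-1<0$, so the root $R$ is unique. Moreover $\Phi_1(r)>0$ for $r<R$.
\item \emph{Range of $R$.} The root lies in the range where the monotonicity of $g$ holds. Since $A\le 2$, we have $\Phi_1(r)\le \frac32\frac{1-r}{1+r}-e^r-r$. This is already negative at $r=1/2$ (its value there is $\tfrac12-e^{1/2}-\tfrac12<0$), so $R<1/2$ and the concavity/monotonicity argument applies throughout $|z|\le R$.
\end{itemize}

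This range check is the step I expect to need the most care, since the bound on $g$ fails for $r$ near $1$.

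For sharpness, take $\omega_0(z)=-z$ and define $f_0$ by $zf_0'/f_0=e^{-z}$, that is,
\[
f_0(z)=z\exp\!\Bigl(\int_0^z\frac{e^{-t}-1}{t}\,dt\Bigr).
\]
This function is starlike, because $\Re e^{-z}>0$ on $\mathbb{D}$, hence univalent, and so $f_0\in\mathcal{S}_e^*$. At $z=-r$ we get $e^{\omega_0}=e^r$, $z\omega_0'=r$ and $\frac{1+z}{1-z}=\frac{1-r}{1+r}$. Consequently
\[
T_{f_0}(-r)=\frac{2}{A-1}\,\Phi_1(r),
\]
which vanishes at $r=R$ and is negative for $r>R$. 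So the radius cannot be enlarged.
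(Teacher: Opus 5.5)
Your proposal is correct and follows essentially the same route as the paper: Dieudonn\'e's lemma reduces the problem to maximizing $e^u\cos v+u+\frac{r^2-u^2-v^2}{1-r^2}$ over $u^2+v^2\le r^2$, the maximum $e^r+r$ is attained at $v=0$, $u=r$ once the root is shown to lie in a small interval, and sharpness comes from $f_0$ with $\omega_0(z)=-z$ evaluated at $z=-r$. The only differences are cosmetic: you eliminate $v$ via $\cos v\le 1$ and $-v^2\le 0$ instead of the paper's partial-derivative argument, and you get monotonicity in $u$ from concavity of $g$ with $R<1/2$ instead of the paper's direct lower bound on $\psi'$ with $R<0.25$.
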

\begin{proof}
Since $f\in\mathcal{S}_e^*$, there exists a Schwarz function $\omega\in\Omega$ such that
\begin{equation}\label{pm4}
\frac{zf'(z)}{f(z)} = e^{\omega(z)}, \qquad z\in\mathbb{D}.
\end{equation}
Differentiating logarithmically gives
\begin{equation}\label{pm5}
1+\frac{zf''(z)}{f'(z)}
=
e^{\omega(z)}+z\omega'(z).
\end{equation}
Substituting \eqref{pm5} into the Avkhadiev--Wirths operator \eqref{pm1}, we obtain
\begin{equation}\label{pm6}
T_f(z)
=
\frac{2}{A-1}
\left[
\frac{A+1}{2}\left(\frac{1+z}{1-z}\right)
-
\left(e^{\omega(z)}+z\omega'(z)\right)
\right].
\end{equation}
Taking real parts yields
\begin{equation}\label{pm7}
\Re\{T_f(z)\}
=
\frac{2}{A-1}
\left[
\frac{A+1}{2}
\Re\left(\frac{1+z}{1-z}\right)
-
\Re\left(e^{\omega(z)}+z\omega'(z)\right)
\right].
\end{equation}

For $|z|=r<1$, the sharp M\"obius estimate
\[
\Re\left(\frac{1+z}{1-z}\right)
\ge
\frac{1-r}{1+r}
\]
implies
\begin{equation}\label{pm8}
\Re\{T_f(z)\}
\ge
\frac{2}{A-1}
\left[
\frac{A+1}{2}\left(\frac{1-r}{1+r}\right)
-
\Re\left(e^{\omega(z)}+z\omega'(z)\right)
\right].
\end{equation}

We now maximize the quantity
\[
\Re\left(e^{\omega(z)}+z\omega'(z)\right).
\]
Fix $z\in\mathbb{D}$ with $|z|=r$, and write
\[
\omega(z)=u+iv.
\]
By Dieudonn\'es lemma,
\begin{equation}\label{pm9}
\left|
z\omega'(z)-(u+iv)
\right|
\le
\frac{r^2-(u^2+v^2)}{1-r^2}.
\end{equation}
Note that $\Re(z\omega')=u+\Re(z\omega'-\omega)$ and $\Re(z\omega'-(u+iv))\leq |z\omega'-(u+iv)|$.
Hence
\begin{equation}\label{pm10}
\Re\left(e^{\omega(z)}+z\omega'(z)\right)
\le \Re\left(e^{\omega(z)}\right)+\Re\left(z\omega'(z)\right)\leq
e^u\cos v
+
u
+
\frac{r^2-u^2-v^2}{1-r^2}.
\end{equation}

Define
\begin{equation}\label{pm_psi_def}
\Psi(u,v)
=
e^u\cos v
+
u
+
\frac{r^2-u^2-v^2}{1-r^2},
\qquad
u^2+v^2\le r^2.
\end{equation}
Differentiating with respect to $v$ gives
\[
\frac{\partial\Psi}{\partial v}
=
-e^u\sin v
-
\frac{2v}{1-r^2}.
\]
Thus
\[
\frac{\partial\Psi}{\partial v}<0
\qquad
(1>r\geq v>0),
\]
and by symmetry the maximum occurs at $v=0$. Therefore the problem reduces to maximizing
\begin{equation}\label{pm_psi_u_def}
\psi(u)
=
e^u
+
u
+
\frac{r^2-u^2}{1-r^2},
\qquad
u\in[-r,r].
\end{equation}
Differentiation yields
\begin{equation}\label{pm_psi_prime}
\psi'(u)
=
e^u
+
1
-
\frac{2u}{1-r^2}.
\end{equation}

Next, observe that
\[
\Phi_1(r)
\le
\frac{3}{2}\left(\frac{1-r}{1+r}\right)-e^r-r.
\]
Since
\[
\frac{3}{2}\left(\frac{1-0.25}{1+0.25}\right)
-e^{0.25}
-0.25
<0,
\]
the unique root of $\Phi_1(r)=0$ satisfies
\[
R_{\mathcal S_e^*,\mathcal C_c(A)}<0.25
\]
for all $A\in(1,2]$.

Consequently, for $0<r<0.25$ and $u\in[-r,r]$,
\[
e^u\ge e^{-0.25},
\qquad
-\frac{2u}{1-r^2}
\ge
-\frac{0.5}{1-0.25^2}.
\]
Hence
\[
\psi'(u)
>
e^{-0.25}
+
1
-
\frac{0.5}{1-0.25^2}
>
0.
\]
Therefore $\psi(u)$ is strictly increasing on $[-r,r]$, and its maximum occurs at $u=r$. Substituting into \eqref{pm_psi_u_def} gives
\begin{equation}\label{pm11}
\max_{-r\le u\le r}\psi(u)
=
\psi(r)
=
e^r+r.
\end{equation}

Combining \eqref{pm8} and \eqref{pm11}, we obtain
\[
\Re\{T_f(z)\}
\ge
\frac{2}{A-1}
\left[
\frac{A+1}{2}\left(\frac{1-r}{1+r}\right)
-
(e^r+r)
\right]
=
\frac{2}{A-1}\Phi_1(r).
\]
Thus
\[
\Re\{T_f(z)\}>0
\]
whenever $\Phi_1(r)>0$.

We now prove existence and uniqueness of the root of $\Phi_1$. Since
\[
\Phi_1(0)=\frac{A-1}{2}>0,
\]
and
\[
\lim_{r\to1^-}\Phi_1(r)=-(e+1)<0,
\]
the Intermediate Value Theorem guarantees at least one root in $(0,1)$.

Differentiating,
\begin{equation}\label{pm_phi_prime}
\Phi_1'(r)
=
-\frac{A+1}{(1+r)^2}
-
e^r
-
1
<
0.
\end{equation}
Hence $\Phi_1$ is strictly decreasing on $(0,1)$, and the root is unique.

To prove sharpness, define
\begin{equation}\label{eq:extremal_f0}
f_0(z)
=
z\exp\left(
\int_0^z
\frac{e^{-t}-1}{t}\,dt
\right).
\end{equation}
Then
\[
\frac{zf_0'(z)}{f_0(z)}
=
e^{-z},
\]
so $f_0\in\mathcal S_e^*$ with extremal Schwarz function $\omega_0(z)=-z$.

Moreover,
\[
\Re\left\{
\frac{zf_0'(z)}{f_0(z)}
\right\}
=
e^{-x}\cos y>0
\qquad
(z=x+iy\in\mathbb D),
\]
since $|y|<1$. Hence $f_0$ is starlike and therefore univalent.

Differentiating once more yields
\[
1+\frac{zf_0''(z)}{f_0'(z)}
=
e^{-z}-z.
\]
Substituting into \eqref{pm1}, we obtain
\[
T_{f_0}(z)
=
\frac{2}{A-1}
\left[
\frac{A+1}{2}\left(\frac{1+z}{1-z}\right)
-
(e^{-z}-z)
\right].
\]
Evaluating at $z=-r$ gives
\[
T_{f_0}(-r)
=
\frac{2}{A-1}\Phi_1(r).
\]
Hence
\[
\Re\{T_{f_0}(-r)\}<0
\qquad
(r>R_{\mathcal S_e^*,\mathcal C_c(A)}),
\]
showing that the radius cannot be improved.

Finally, for the extremal choice $\omega_0(z)=-z$,
\[
|z\omega_0'(z)-\omega_0(z)|=0,
\]
and equality is attained simultaneously in all optimization steps, including
\[
\Re(e^{\omega_0(z)})
=
e^u\cos(0)
=
e^u.
\]
Therefore the radius is sharp.
\end{proof}

\begin{theorem}[Radius of Concavity for the Class $\mathcal{C}_e$]
\label{T2}
Let $A\in(1,2]$. If a function $f\in\mathcal{C}_e$, then
\begin{equation}\label{eq:convex_operator_ineq}
\Re\{T_f(z)\}>0 \qquad \text{for } |z|<R_{\mathcal{C}_e,\mathcal{C}_c(A)},
\end{equation}
where $R_{\mathcal{C}_e,\mathcal{C}_c(A)}$ is the unique root in $(0,1)$ of the transcendental equation
\begin{equation}\label{pm15}
\Phi_2(r) := \frac{A+1}{2} \left( \frac{1-r}{1+r} \right) - e^r = 0.
\end{equation}
This radius threshold is sharp.
\end{theorem}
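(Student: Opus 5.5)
For $f\in\mathcal{C}_e$ there is a Schwarz function $\omega\in\Omega$ with $1+zf''(z)/f'(z)=e^{\omega(z)}$. Substituting this into \eqref{pm1} gives
\[
\Re\{T_f(z)\}=\frac{2}{A-1}\left[\frac{A+1}{2}\Re\left(\frac{1+z}{1-z}\right)-\Re\left(e^{\omega(z)}\right)\right].
\]
No derivative of $\omega$ enters, so Dieudonn\'e's lemma (Lemma~\ref{L3}) is not needed. The argument is therefore simpler than for Theorem~\ref{T1}.

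For $|z|=r$ we use the Möbius estimate $\Re\frac{1+z}{1-z}\ge\frac{1-r}{1+r}$, exactly as in \eqref{pm8}. For the exponential term, Schwarz's lemma gives $|\omega(z)|\le r$, hence $\Re e^{\omega(z)}\le |e^{\omega(z)}|=e^{\Re\omega(z)}\le e^r$. This is also the upper bound in Lemma~\ref{L2}. Together these give $\Re\{T_f(z)\}\ge \frac{2}{A-1}\Phi_2(r)$, which is positive whenever $\Phi_2(r)>0$.

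Existence and uniqueness of the root follow as for $\Phi_1$:
\[
\Phi_2(0)=\frac{A-1}{2}>0,\qquad \Phi_2(1^-)=-e<0,\qquad \Phi_2'(r)=-\frac{A+1}{(1+r)^2}-e^r<0 .
\]
So $\Phi_2$ is strictly decreasing on $(0,1)$ and has exactly one zero there. Since $\Phi_2(r)>0$ precisely for $r<R_{\mathcal{C}_e,\mathcal{C}_c(A)}$, this proves \eqref{eq:convex_operator_ineq}.

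For sharpness, again use $\omega_0(z)=-z$. Define $f_0$ by $f_0(0)=0$ and
\[
f_0'(z)=\exp\left(\int_0^z\frac{e^{-t}-1}{t}\,dt\right),
\]
so that $1+zf_0''/f_0'=e^{-z}$. We must check that $f_0\in\mathcal{S}$. Since $\Re e^{-z}=e^{-x}\cos y>0$ on $\mathbb{D}$, $f_0$ is convex, hence univalent, and so $f_0\in\mathcal{C}_e$. Then
\[
T_{f_0}(z)=\frac{2}{A-1}\left[\frac{A+1}{2}\cdot\frac{1+z}{1-z}-e^{-z}\right].
\]
At $z=-r$ this is real and equals $\frac{2}{A-1}\Phi_2(r)$, which is negative for $r>R_{\mathcal{C}_e,\mathcal{C}_c(A)}$. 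So the radius cannot be enlarged. Equality holds in both estimates at $z=-r$: the Möbius bound is attained at $z=-r$, and $e^{\omega_0(-r)}=e^r$.

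The only point needing care is the univalence of the extremal function. The positivity of $\Re e^{-z}$ on $\mathbb{D}$ handles it. Everything else is routine.
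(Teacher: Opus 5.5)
Your proof is correct and follows the paper's argument essentially step for step. It uses the same substitution $1+zf''/f'=e^{\omega}$ (so Dieudonn\'e's lemma is not needed), the M\"obius lower bound together with $\Re e^{\omega(z)}\le e^{r}$, strict monotonicity of $\Phi_2$, and the same extremal function (the paper's $f_1$), which is convex because $\Re e^{-z}>0$ and which attains both bounds at $z=-r$.
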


\begin{proof}
Let $f\in\mathcal{C}_e$. By definition, there exists a Schwarz function $\omega(z)\in\Omega$ such that
\begin{equation}\label{pm16}
1+\frac{zf''(z)}{f'(z)} = e^{\omega(z)}, \qquad z\in\mathbb{D}.
\end{equation}
Using the definition of the Avkhadiev--Wirths operator \eqref{pm1}, we express $T_f(z)$ as
\begin{equation}\label{eq:convex_AW_operator}
T_f(z) = \frac{2}{A-1} \left[ \frac{A+1}{2} \left( \frac{1+z}{1-z} \right) - \left( 1+\frac{zf''(z)}{f'(z)} \right) \right].
\end{equation}
Substituting \eqref{pm16} into \eqref{eq:convex_AW_operator} and taking the real part yields
\begin{equation}\label{pm17}
\Re\{T_f(z)\} = \frac{2}{A-1} \left[ \frac{A+1}{2} \Re\left( \frac{1+z}{1-z} \right) - \Re\left( e^{\omega(z)} \right) \right].
\end{equation}
For $|z|=r<1$, the sharp M\"obius lower bound gives
\begin{equation}\label{pm18}
\Re\left( \frac{1+z}{1-z} \right) \ge \frac{1-r}{1+r}.
\end{equation}
By the Schwarz lemma, any $\omega \in \Omega$ satisfies $|\omega(z)|\le r$. Thus, writing $\omega(z)=u+iv$, the real part of the exponential mapping is bounded above by
\begin{equation}\label{pm19}
\Re\left( e^{\omega(z)} \right) = e^{u}\cos(v) \le e^u \le e^{|\omega(z)|} \le e^r.
\end{equation}
Combining the estimates \eqref{pm18} and \eqref{pm19} with \eqref{pm17}, we obtain
\begin{equation}\label{eq:convex_final_bound}
\Re\{T_f(z)\} \ge \frac{2}{A-1} \left[ \frac{A+1}{2} \left( \frac{1-r}{1+r} \right) - e^r \right] = \frac{2}{A-1}\Phi_2(r).
\end{equation}
Since $A > 1$, the coefficient multiplier is positive, implying that $\Re\{T_f(z)\}>0$ holds whenever $\Phi_2(r)>0$.
We now establish the existence and uniqueness of the root of $\Phi_2(r)$ within the interval $(0,1)$. Evaluating $\Phi_2(r)$ at $r=0$ yields
\[
\Phi_2(0) = \frac{A+1}{2} - 1 = \frac{A-1}{2} > 0,
\]
whereas taking the limit as $r \to 1^-$ gives
\[
\lim_{r\to1^-}\Phi_2(r) = 0 - e = -e < 0.
\]
By the Intermediate Value Theorem, $\Phi_2(r)$ possesses at least one root inside $(0,1)$. To prove uniqueness, we differentiate $\Phi_2(r)$ with respect to $r$:
\begin{equation}\label{eq:phi2_prime}
\Phi_2'(r) = -\frac{A+1}{(1+r)^2} - e^r.
\end{equation}
Because $A > 1$ and $r \in (0,1)$, both terms in \eqref{eq:phi2_prime} are positive, forcing $\Phi_2'(r)<0$ across the open interval. This strict monotonicity ensures the uniqueness of the root $R_{\mathcal{C}_e,\mathcal{C}_c(A)}$.

To establish sharpness, we introduce the function $f_1 \in \mathcal{A}$ defined by
\begin{equation}\label{pm20}
f_1(z) = \int_0^z \exp\left( \int_0^t \frac{e^{-s}-1}{s}\,ds \right)\,dt.
\end{equation}
Differentiating $f_1(z)$ yields $f_1'(z) = \exp\left( \int_0^z \frac{e^{-s}-1}{s}\,ds \right)$. Differentiating again gives $1+\frac{zf_1''(z)}{f_1'(z)} = e^{-z}$. For all $z=x+iy \in \mathbb{D}$, $|z|<1 \implies |y|<1$, which requires $\cos(y) > \cos(1) > 0$. Therefore, $\Re\{1+\frac{zf_1''(z)}{f_1'(z)}\} = \Re\{e^{-z}\} = e^{-x}\cos(y) > 0$. This satisfies the Alexander-Noshiro criterion for convexity. Since every convex conformal mapping is injective, $f_1$ is univalent ($f_1 \in \mathcal{S}$). Setting the disk automorphism $\omega_0(z) = -z \in \Omega$, condition \eqref{def2} is fulfilled, confirming that $f_1 \in \mathcal{C}_e$.

Substituting the differential condition into the definition of the Avkhadiev--Wirths operator \eqref{pm1} yields
\begin{equation}\label{pm22_final}
T_{f_1}(z) = \frac{2}{A-1} \left[ \frac{A+1}{2} \left( \frac{1+z}{1-z} \right) - e^{-z} \right].
\end{equation}
Evaluating the real part of \eqref{pm22_final} at the test point $z_0 = -r$ gives
\[
\Re\{T_{f_1}(-r)\} = \frac{2}{A-1} \left[ \frac{A+1}{2} \left( \frac{1-r}{1+r} \right) - e^{r} \right] = \frac{2}{A-1}\Phi_2(r).
\]
For any $r > R_{\mathcal{C}_e,\mathcal{C}_c(A)}$, the monotonicity of $\Phi_2(r)$ implies $\Phi_2(r) < 0$, which forces $\Re\{T_{f_1}(-r)\} < 0$. This completes the proof of sharpness.
\end{proof}

\section{Comparative Numerical Analysis and Monotonic Trends}

In this section, we provide the numerical verification for the radius of concavity equations derived in Theorems~\ref{T1} and \ref{T2}. The roots of the transcendental boundary equations \eqref{pm3} and \eqref{pm15} were isolated using a high-precision Newton--Raphson root-finding algorithm with a convergence termination criterion of $|\Phi_i(r)| < 10^{-12}$. All numerical calculations were executed using sixteen decimal digits of floating-point precision.

The calculated threshold values of these radii across a representative selection of the opening parameters $A \in (1,2]$ are compiled in Table~\ref{tab1}.

\begin{table}[h!]
\centering
\caption{Radii of concavity for the classes $\mathcal{S}_e^*$ and $\mathcal{C}_e$.}
\label{tab1}
\setlength{\extrarowheight}{3pt}
\small
\begin{tabular}{ccc}
\toprule
\textbf{Parameter } $A$ & \textbf{Correct } $R_{\mathcal{S}_e^*,\mathcal{C}_c(A)}$ & \textbf{Correct } $R_{\mathcal{C}_e,\mathcal{C}_c(A)}$ \\
\midrule
1.2 & 0.02403570 & 0.03176293 \\
1.4 & 0.04632562 & 0.06072398 \\
1.5 & 0.05689030 & 0.07428977 \\
1.6 & 0.06710178 & 0.08730619 \\
1.8 & 0.08655177 & 0.11184415 \\
2.0 & 0.10482958 & 0.13460708 \\
\bottomrule
\end{tabular}
\end{table}

\begin{figure}[h!]
\centering
\begin{tikzpicture}[xscale=6.5, yscale=40]
    \draw[gray!15, ultra thin, step=0.1] (1.0,0) grid (2.12,0.16);
    \draw[gray!30, thin, step=0.2] (1.0,0) grid (2.12,0.16);
    
    \draw[->, thick] (0.97,0) -- (2.22,0) node[right] {\small $A$};
    \draw[->, thick] (1.0,-0.005) -- (1.0,0.175) node[above] {\small Radius of Concavity $R$};
    
    \foreach \x in {1.1, 1.2, 1.3, 1.4, 1.5, 1.6, 1.7, 1.8, 1.9, 2.0} {
        \draw (\x,1.2pt) -- (\x,-1.2pt) node[below] {\footnotesize \x};
    }
    
    \foreach \y in {0.00, 0.02, 0.04, 0.06, 0.08, 0.10, 0.12, 0.14, 0.16} {
        \draw (1.0+0.8pt,\y) -- (1.0-0.8pt,\y) node[left] {\footnotesize \y};
    }

    \draw[blue!80!black, ultra thick, mark=*, mark size=1.2pt] plot[smooth] coordinates {
        (1.2, 0.03176)
        (1.4, 0.06072)
        (1.5, 0.07429)
        (1.6, 0.08731)
        (1.8, 0.11184)
        (2.0, 0.13461)
    };
    \node[blue!80!black, right, font=\small\bfseries] at (2.02, 0.13461) {$R_{\mathcal{C}_e,\mathcal{C}_c(A)}$};

    \draw[red!80!black, dashed, ultra thick, mark=square*, mark size=1.2pt] plot[smooth] coordinates {
        (1.2, 0.02404)
        (1.4, 0.04633)
        (1.5, 0.05689)
        (1.6, 0.06710)
        (1.8, 0.08655)
        (2.0, 0.10483)
    };
    \node[red!80!black, right, font=\small\bfseries] at (2.02, 0.10483) {$R_{\mathcal{S}_e^*,\mathcal{C}_c(A)}$};

    \filldraw[blue!80!black] (2.0, 0.13461) circle (1.4pt);
    \filldraw[red!80!black] (2.0, 0.10483) circle (1.4pt);
\end{tikzpicture}
\caption{Comparison of the radii of concavity for $\mathcal{S}_e^*$ and $\mathcal{C}_e$ showing monotonic trends over the parameter space $A \in (1,2]$.}
\label{fig:radii_comparison_large}
\end{figure}

\begin{remark}\label{rem:numerical_trends}
\begin{enumerate}
    \item For both classes $\mathcal{S}_e^*$ and $\mathcal{C}_e$, the radius of concavity is a strictly increasing function of the opening parameter $A$ over the interval $(1,2]$. Since larger values of $A$ yield a wider opening angle $\pi A$ at infinity, the geometric constraint defining the concave family $\mathcal{C}_c(A)$ becomes less restrictive, expanding the size of the admissible subdisk domains.
    \item For every fixed parameter $A \in (1,2]$, the radius corresponding to the convex family $\mathcal{C}_e$ systematically exceeds that of the starlike family $\mathcal{S}_e^*$, establishing the strict inequality
    \[
    R_{\mathcal{C}_e,\mathcal{C}_c(A)} > R_{\mathcal{S}_e^*,\mathcal{C}_c(A)}.
    \]
    This behavior reflects the stronger analytic constraints imposed by the convex subordination configuration over its starlike counterpart.
    \item Both families achieve their sharp geometric thresholds via the disk automorphism $\omega_0(z) = -z$, as verified by the simultaneous attainment of equality across all bounding operations.
\end{enumerate}
\end{remark}

\section*{Compliance with Ethical Standards}

\noindent
\textbf{Funding:} 
The first author acknowledges financial support from the Council of Scientific and Industrial Research (CSIR), New Delhi, India, under Grant No.~09/1224(16975)/2023-EMR-I.

\vspace{0.3em}

\noindent
\textbf{Conflict of Interest:} 
The authors declare that there is no conflict of interest regarding the publication of this manuscript.

\vspace{0.3em}

\noindent
\textbf{Data Availability Statement:} 
Data sharing is not applicable to this article since no datasets were generated or analyzed during the present mathematical investigation.

\end{document}